\newtheorem{theorem}{Theorem}[section]
\newtheorem{proposition}{Proposition}[section]
\newtheorem{remark}{Remark}[section]
\newtheorem{corollary}{Corollary}[section]
\newtheorem{definition}{Definition}[section]
\newtheorem{example}{Example}[section]
\def\w{\widetilde}
\def\Ob{\operatorname{Ob}}
\def\St{\operatorname{St}}
\def\Aut{\operatorname{Aut}}
\def\inc{\operatorname{inc}}
\def\Der{\operatorname{Der}}
\def\D{\operatorname{D}}
\def\CXM{\bm{\mathsf{CovXMod/(A,B,\alpha)}}}
\def\GGdA(G){\bm{\mathsf{GpGpdAct(G)}}}
\def\PGGdA(G){\bm{\mathsf{PGpGpdAct(G)}}}
\def\GGdAct(GT){\bm{\mathsf{GpGpdAct(\w{G})}}}
\def\PGGdAct(GT){\bm{\mathsf{PGpGpdAct(\w{G})}}}
\def\GGdC/G{\bm{\mathsf{GpGpdCov/G}}}
\def\GGdCov/X{\bm{\mathsf{GpGpdCov/\pi X}}}
\def\GdC/G{\bm{\mathsf{GpdCov/G}}}
\def\TGrC/X{\mathsf{TGrCov/X}}
\def\GdA(G){\bm{\mathsf{GpdAct(G)}}}
\def\Act(G){\bm{\mathsf{GpdAct(G)}}}
\def\Cov/G{\bm{\mathsf{GpdCov/G}}}
\def\C{\bm{\mathsf{C}}}
\def\LXM{\bm{\mathsf{LXMod/}}}
\def\XMod{\bm{\mathsf{XMod}}}
\def\GpGd{\bm{\mathsf{GpGpd}}}
\begin{document}
\title{Further remarks on liftings of crossed modules}

\author[]{Tunçar Şahan}
\affil[]{\small{Department of Mathematics, Aksaray University, Aksaray, TURKEY \\ \textit{tuncarsahan@gmail.com}}}

\date{}

\maketitle
\begin{abstract}
In this paper we define the notion of pullback lifting of a lifting crossed module over a crossed module morphism and interpret this notion in the category of group-groupoid actions as pullback action. Moreover, we give a criterion for the lifting of homotopic crossed module morphisms to be homotopic, which will be called homotopy lifting property for crossed module morphisms. Finally, we investigate some properties of derivations of lifting crossed modules according to base crossed module derivations.
\end{abstract}

\noindent{\bf Key Words:} Crossed module, lifting, groupoid action, derivation
\\ {\bf Classification:} 20L05,  57M10,  22AXX, 22A22, 18D35

\section{Introduction}

Covering groupoids have an important role in the applications of groupoids (see for example  \cite{Br1} and \cite{Hi}). It is well known that for a groupoid $G$, the category $\Act(G)$ of  groupoid actions  of $G$ on  sets, these are also called operations or $G$-sets, are equivalent to  the category $\Cov/G$ of  covering groupoids of $G$. In \cite[Theorem 2]{Br-Da-Ha} analogous result has been proved for topological groupoids.

Group-groupoids are internal categories, and hence internal groupoids in the category of groups, or equivalently group objects in the category of small categories \cite{BS1}. If $G$ is a group-groupoid, then the category $\GGdC/G$ of group-groupoid coverings of $G$ is equivalent to the category $\GGdA(G)$ of group-groupoid actions of $G$ on groups \cite[Proposition 3.1]{Br-Mu1}. In \cite{Ak-Al-Mu-Sa} this result has recently generalized to the case where $G$ is an internal groupoid for an algebraic category $\C$, acting on a group with operations. See \cite{Mu-Sa} for covering groupoids of categorical groups, \cite{Br-Mu1,Mu1} for coverings of crossed modules of groups and \cite{Mu-Tu} for coverings and crossed modules of topological groups with operations.

Whitehead introduced the notion of crossed module, in a series of papers as algebraic models for (connected) homotopy 2-types (i.e. connected spaces with no homotopy group in degrees above 2), in much the same way that groups are algebraic models for homotopy 1-types \cite{Wth1,Wth3,Wth2}. A crossed module consists of groups $A$ and $B$, where $B$ acts on $A$, and a homomorphism of groups $\alpha\colon A\rightarrow B$ satisfying $\alpha(b\cdot a)=b+\alpha(a)-b$ and $ {\alpha(a)}\cdot a_1 = a+a_1-a$ for all $a,a_1 \in A$ and $b\in B$. Crossed modules can be viewed as 2-dimensional groups \cite{BrLowDim} and have been widely used in: homotopy theory, \cite{BHS}; the theory of identities among relations for group presentations, \cite{BrownHubesshman}; algebraic K-theory \cite{Loday}; and homological algebra, \cite{Hub,Lue}.

In \cite{BS1} it was proved that the categories of crossed modules and group-groupoids, under the name of $\mathcal{G}$-groupoids, are equivalent (see also  \cite{Loday82} for an alternative equivalence in terms of an algebraic object called {\em cat$^n$-groups}). By applying this equivalence of the categories,  normal and quotient objects in the category of group-groupoids  have been recently obtained in \cite{Mu-Sa-Al}. Moreover in \cite{Mu-Tu-Ar} authors have interpreted in the category of crossed modules the notion of action of a group-groupoid on a group via a group homomorphism and hence introduced the notion of lifting of a crossed module. Further they showed some results on liftings of crossed modules and proved that the category $\LXM{\bm{\mathsf{(A,B,\alpha)}}}$ of liftings of crossed modules, the category $\CXM$ of covering crossed modules and the category $\GGdA(G)$ of group-groupoid actions are equivalent.

In this study, first of all, we give pulback of liftings of crossed modules. Further, we obtain a property which will be called homotopy lifting property for crossed modules. Finally, we lift the derivations of base crossed module to lifting crossed modules.

\section{Preliminaries}

Let  $G$  be a groupoid.  We write  $\Ob(G)$  for the set of
objects  of    $G $ and write $G$ for the set of morphisms. We also identify  $\Ob(G)$  with the set of
identities of  $G $ and so an  element  of  $\Ob(G)$  may be written as
$x$  or  $1_x$  as convenient.  We  write   $d_0, d_1 \colon
G\rightarrow \Ob(G)$  for the source and target maps, and, as usual,
write $G(x,y)$ for $d_0^{-1}(x)\cap d_1 ^{-1}(y)$, for $x,y\in \Ob(G)$.
The composition  $h\circ g$  of two elements of  $G$  is defined if
and only if  $d_0(h) =d_1(g)$, and so the   map $(h,g)\mapsto h\circ g$
is defined on the pullback  $G {_{d_0}\times_{d_1}} G$
of $d_0$  and $d_1 $.  The \emph{inverse} of $g\in G(x,y)$ is denoted by
$g^{-1}\in G(y,x)$. If  $x\in \Ob(G) $, we write $\St_Gx$  for $d_0^{-1}(x) $  and  call
the \emph{star} of $G$ at $x$. The set of all
morphisms from $x$ to $x$ is a group, called \emph{object group}
at $x$, and denoted by $G(x)$.

A groupoid $G$ is \emph{transitive (resp. simply transitive, 1-transitive or totally intransitive)} if
$G(x,y)\neq\emptyset$ (resp. $G(x,y)$ has no more than one element, $G(x,y)$ has exactly one element or
$G(x,y)=\emptyset$) for all $x,y\in \Ob(G)$ such that $x\neq y$.

Let $p\colon\widetilde G\rightarrow G$ be a morphism of groupoids. Then $p$ is
called a \emph{covering morphism} and $\widetilde{G}$  a \emph{covering groupoid} of $G$ if for
each $\widetilde x\in \Ob(\widetilde G)$ the restriction  $\St_{\widetilde{G}}{\widetilde{x}} \rightarrow \St_{G}{p(\widetilde x)}$  is  bijective.

Assume that $p\colon \w{G}\rightarrow G$ is a covering morphism. Then  we have a lifting function $S_{p}\colon G_{s}\times_{\Ob(p)}\Ob(\w{G})\rightarrow \w{G}$ assigning to the pair $(a,x)$ in the pullback $G_{s}\times_{\Ob(p)}\Ob(\w{G})$ the unique element $b$ of $\St_{\widetilde{G}}{x}$ such that $p(b)=a$. Clearly $S_{p}$ is inverse to $(p,s)\colon  \widetilde{G}\rightarrow G_{s}\times_{\Ob(p)}\Ob(\widetilde{G})$. So it is stated that $p\colon  \widetilde{G}\rightarrow G$ is a covering morphism if and only if $(p,s)$ is a  bijection.

A covering morphism $p\colon \widetilde{G}\rightarrow
G$ is called \emph{transitive } if both $\widetilde{G}$ and $G$ are
transitive.  A transitive covering morphism $p\colon\widetilde G\rightarrow G$
is called \emph{universal} if $\widetilde G$ covers every cover of $G$, i.e.,  for every covering morphism $q\colon \widetilde{H}\rightarrow G$ there is a unique
morphism of groupoids $\widetilde{p}\colon \widetilde G\rightarrow \widetilde{H}$ such that $q\widetilde{p}=p$
(and hence $\widetilde{p}$ is also a covering morphism), this is equivalent to that for
$\widetilde{x}, \widetilde{y}\in \Ob({\widetilde G})$ the set $\widetilde{G}(\widetilde x, \widetilde y)$
has not more than one element.

Recall that an action of a groupoid $G$ on a set $S$ via a function $\omega\colon S\rightarrow \Ob(G)$ is a function ${G}_{d_{0}}\times_\omega S\rightarrow S, (g,x)\mapsto g\bullet x$ satisfying the usual rule for an action, namely $\omega(g\bullet x)=d_1(g)$, $1_{\omega(s)}\bullet s=s$ and $(h\circ g)\bullet s=h\bullet (g\bullet s)$ whenever defined. A morphism $f\colon (S,\omega)\rightarrow (S',\omega')$ of such actions is a function $f\colon S\rightarrow S'$ such that $w'f=w$ and $f(g\bullet s)=g\bullet f(s)$ whenever $g\bullet s$ is defined. This gives a category $\Act(G)$ of actions of $G$ on sets. For such an action the action groupoid $G\ltimes S$ is defined to have object set $S$, morphisms the pairs $(g,s)$ such that $d_0(g)=\omega(s)$, source and target maps $d_0(g,s)=s$, $d_1(g,s)=g\bullet s$, and the composition
$(g',s')\circ (g,s)=(g\circ g',s)$
whenever $s'=g\bullet s$. The projection $q\colon G\ltimes S\rightarrow G, (g,s)\mapsto s$ is a covering morphism of groupoids and the functor assigning this covering morphism to an action gives an equivalence of the categories $\Act(G)$ and $\Cov/G$.

Let $G$ be a group-groupoid. An action of the group-groupoid $G$ on a group $X$ via $\omega$ consists of a morphism $\omega\colon X\rightarrow \Ob(G)$ from the group $X$ to the underlying group of $\Ob(G)$ and an action of the groupoid $G$ on the underlying set $X$  via $\omega$ such that the following interchange law holds:
\begin{equation*}
(g\bullet x)+(g'\bullet x')=(g+ g')\bullet(x+x') \label{interchangeaction}
\end{equation*}
whenever both sides are defined. A morphism $f\colon (X,\omega)\rightarrow (X',\omega')$ of such actions is a morphism   $f\colon X\rightarrow X'$ of groups and of the underlying operations of $G$.  This gives a category $\GGdA(G)$ of actions of $G$ on groups. For an action of $G$ on the group $X$ via $\omega$, the action groupoid $G\ltimes X$ has a group structure defined by \[(g,x)+(g',x')=(g+g',x+x')\]
and with this operation $G\ltimes X$ becomes a group-groupoid and the projection $p\colon G\ltimes X\rightarrow G$ is an object of the category $\GGdC/G$. By means of this construction the following equivalence of the categories was given in \cite{Br-Mu1}.

\begin{proposition}\cite[Proposition 3.1]{Br-Mu1} The categories $\GGdC/G$ and $\GGdA(G)$ are equivalent.\end{proposition}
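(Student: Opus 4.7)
The plan is to refine the groupoid-level equivalence $\Act(G)\simeq\Cov/G$ recalled in the preliminaries by tracking the additional group structure on both sides. Concretely, I would construct functors $\Phi\colon\GGdA(G)\to\GGdC/G$ and $\Psi\colon\GGdC/G\to\GGdA(G)$ whose underlying groupoid-level functors are the ones already known to give an equivalence, and then verify that the extra group data match up.

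For $\Phi$, given an action $(X,\omega)$ of $G$ on a group, I take the action groupoid $G\ltimes X$ described immediately before the proposition, endow its morphism set with the coordinate-wise sum $(g,x)+(g',x')=(g+g',x+x')$ announced in the statement, and check that this really is a group-groupoid. The composability and closure of source/target under addition reduce precisely to the interchange law for the action, and associativity/inverses are inherited from $G$ and $X$. That the projection $p\colon G\ltimes X\to G$ is a morphism of group-groupoids is immediate, and it is a covering since its star at $x\in X$ is in bijection with $\St_G\omega(x)$ via $(g,x)\mapsto g$. Morphisms of actions are sent to the obvious induced morphisms; functoriality is routine.

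For $\Psi$, given a group-groupoid covering $q\colon\widetilde{G}\to G$, I set $X=\Ob(\widetilde{G})$ with its inherited group structure, $\omega=\Ob(q)$, and define $g\bullet x=d_1\bigl(S_q(g,x)\bigr)$ using the lifting function $S_q$ recalled in the preliminaries. The action axioms and the fact that $\omega$ is a groupoid morphism come from the underlying $\Act(G)\simeq\Cov/G$ equivalence. The new point to establish is the interchange law on the action side; for this I would first show that $S_q$ is itself a group homomorphism, namely $S_q(g+g',x+x')=S_q(g,x)+S_q(g',x')$. This follows because both sides lie in $\St_{\widetilde{G}}(x+x')$ (using that addition in $\widetilde{G}$ respects source, which holds since $\widetilde{G}$ is a group-groupoid) and both map under $q$ to $g+g'$, so the star-bijectivity of $q$ forces them to coincide. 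Applying $d_1$ then yields $(g\bullet x)+(g'\bullet x')=(g+g')\bullet(x+x')$.

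The main obstacle is exactly this verification that $S_q$ preserves addition; once it is in place the rest is essentially bookkeeping. To finish the equivalence I would exhibit the unit and counit as the ones arising from the groupoid equivalence $\Act(G)\simeq\Cov/G$, observing that the natural isomorphism $G\ltimes \Ob(\widetilde{G})\cong\widetilde{G}$ given by $(g,x)\mapsto S_q(g,x)$ is a group-groupoid isomorphism precisely because $S_q$ is additive, and that the identity natural isomorphism on the action side automatically lies in $\GGdA(G)$. Thus the required equivalence is obtained with no new data beyond the additive compatibility of $S_q$.
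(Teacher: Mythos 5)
Your proposal is correct and follows essentially the same route as the source: the paper itself only cites Brown--Mucuk and recalls exactly this construction (the action group-groupoid $G\ltimes X$ with coordinatewise addition and the covering projection $p$), and the inverse functor via the lifting function $S_q$, whose additivity by star-bijectivity is the standard key step. No substantive difference to report.
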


We recall that a crossed module of groups consists of two groups $A$ and $B$,
an action of $B$ on $A$ denoted by $b\cdot a$ for $a\in A$ and $b\in B$;
and a morphism  $\alpha\colon A\rightarrow B$ of groups satisfying the
following conditions for all $a,a_1\in A$ and $b\in B$
\begin{enumerate}[label=\textbf{CM\arabic{*})}, leftmargin=2cm]
	\item\label{CM1} $\alpha(b\cdot a)=b+\alpha(a)-b$,
	\item\label{CM2} $\alpha(a)\cdot a_1=a+a_1-a$.
\end{enumerate}
We will denote such a crossed module by $(A,B,\alpha)$.

For instance, the inclusion map of a normal subgroup is a crossed module with the conjugation action. As a motivating geometric example of crossed modules due to Whitehead \cite{Wth1,Wth2} if $X$ is  topological space and $A\subseteq X$ with $x\in A$, then  there is a natural action of $\pi_1(A,x)$ on second relative homotopy group $\pi_2(X,A,x)$ and with this action the boundary map \[\partial\colon\pi_2(X,A,x)\rightarrow \pi_1(A,x)\] becomes a crossed module. This crossed module is called \emph{fundamental crossed module} and denoted by $\Pi(X,A,x)$ (see \cite{BH} for further information).

The following are some standard properties of crossed modules.

\begin{proposition}
	Let $(A,B,\alpha)$ be a crossed module. Then
	\begin{enumerate}[label=(\roman{*}), leftmargin=1cm]
		\item  $\alpha(A)$ is a normal subgroup of $B$.
		\item $\ker \alpha$ is central in $A$, i.e. $\ker \alpha$ is a subset of $Z(A)$, the center of $A$.
		\item $\alpha(A)$ acts trivially on $Z(A)$.
	\end{enumerate}
\end{proposition}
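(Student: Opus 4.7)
The plan is to derive each of the three parts directly from the two crossed module axioms \ref{CM1} and \ref{CM2}, without invoking any further machinery. All three statements are essentially one-line computations, so the work lies in choosing the right substitution in each axiom rather than in any genuine obstacle.

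For part (i), to show $\alpha(A)\trianglelefteq B$ I would take an arbitrary $b\in B$ and an arbitrary image element $\alpha(a)\in\alpha(A)$ and apply \ref{CM1} to rewrite the conjugate $b+\alpha(a)-b$ as $\alpha(b\cdot a)$; since $b\cdot a\in A$, this conjugate lies in $\alpha(A)$, giving normality.

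For part (ii), to show $\ker\alpha\subseteq Z(A)$ I would take $a\in\ker\alpha$ and an arbitrary $a_1\in A$, and specialize \ref{CM2} to the pair $(a,a_1)$: the left hand side becomes $\alpha(a)\cdot a_1 = 0\cdot a_1 = a_1$, while the right hand side is $a+a_1-a$; comparing gives $a+a_1=a_1+a$, so $a$ commutes with every element of $A$.

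For part (iii), to show $\alpha(A)$ acts trivially on $Z(A)$ I would take $z\in Z(A)$ and $\alpha(a)\in\alpha(A)$ and apply \ref{CM2} to the pair $(a,z)$: the right hand side $a+z-a$ collapses to $z$ because $z$ is central in $A$, so $\alpha(a)\cdot z=z$, as required. No single step here is a real obstacle; the only thing to be careful about is the direction of the substitutions in \ref{CM2}, in particular that part (ii) really requires feeding the kernel element into the \emph{first} slot of \ref{CM2}, while part (iii) needs it in the second slot.
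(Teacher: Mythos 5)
Your proposal is correct. The paper states these as ``standard properties of crossed modules'' and omits the proof entirely, and your argument is exactly the standard one: part (i) from \textbf{CM1)} via $b+\alpha(a)-b=\alpha(b\cdot a)\in\alpha(A)$, part (ii) from \textbf{CM2)} with the kernel element in the first slot (using that the identity of $B$ acts trivially), and part (iii) from \textbf{CM2)} with the central element in the second slot.
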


Let $(A,B,\alpha)$ and $(A',B',\alpha')$ be two crossed modules. A morphism  $(f_1,f_2)$
from $(A,B,\alpha)$ to $(A',B',\alpha')$  is a pair of morphisms of groups
$f_1\colon A\rightarrow A'$ and $f_2\colon B\rightarrow B'$  such that
$f_2\alpha=\alpha'f_1$ and  $f_1(b\cdot a)=f_2(b)\cdot f_1(a)$ for  $a\in A$ and $b\in B$.

Crossed modules with morphisms between them form a category which is called the category of crossed modules and denoted by $\XMod$. It was proved by Brown and Spencer in \cite[Theorem 1]{BS1} that the category $\XMod$ of crossed modules over  groups is equivalent to the category $\GpGd$ of group-groupoids. In \cite{Mu-Tu-Ar} certain groupoid properties adapted for crossed modules as in the following definition.

\begin{definition}\cite{Mu-Tu-Ar}
	Let $(A,B,\alpha)$ be a crossed module. Then $(A,B,\alpha)$ is called \emph{transitive (resp. simply transitive, 1-transitive or totally intransitive)} if $\alpha$ is surjective (resp. injective, bijective or zero morphism and $A$ is abelian).
\end{definition}

\begin{example} If $X$ is a topological group whose underlying topology is path-connected (resp. totally disconnected), then the crossed module $(\St_{\pi X}0,X,t)$ is transitive (resp. totally intransitive).
\end{example}

\begin{example}
	$(N,G,\inc)$ is a simply transitive crossed module.
\end{example}

The notion of litfing of a crossed module was introduced in \cite{Mu-Tu-Ar} as an interpretation of the notion of action group-groupoid in the category of crossed modules over groups. Now we will recall the definition of lifting of a crossed module and some examples. Following that we will give further results on liftings of crossed modules.

\begin{definition}\cite{Mu-Tu-Ar} Let $(A,B,\alpha)$ be a crossed module and $\omega\colon X\rightarrow B$ a morphism of groups.  Then a crossed module $(A,X,\varphi)$, where the action of $X$ on $A$ is defined via $\omega$, is a called a \emph{lifting of $\alpha$ over $\omega$} and  denoted by  $(\varphi,X,\omega)$ if the following diagram is commutative.
	\[\xymatrix{ & X \ar[d]^\omega\\
		A \ar[r]_-\alpha \ar@{-->}[ur]^\varphi & B}\]
\end{definition}

\begin{remark}\label{remlift}
	Let $(\varphi,X,\omega)$ be a lifting of $(A,B,\alpha)$ then $\ker\varphi\subseteq\ker\alpha$ and $(1_A,\omega)$ is a morphism of crossed modules.
\end{remark}

Here are some examples of liftings of crossed modules:
\begin{enumerate}[label=(\roman{*}), leftmargin=2cm]
	\item Let $G$ be a group with trivial center. Then the automorphism crossed module $G\rightarrow \Aut(G)$ is simply transitive and hence every lifting of $G\rightarrow \Aut(G)$ is simply transitive.
	
	\item \label{topliftexam}
	If $p\colon \w{X}\rightarrow X$ is a covering morphism of topological groups. Then with the morphism $\w{t}\colon \St_{\pi X}0\rightarrow \w{X}$, $\w{t}([\alpha])\mapsto\w{\alpha}(1)$, the final point of the lifted path of $\alpha$ at ${\w{0}}$, $(\w{t},\w{X},p)$ becomes a lifting of $(\St_{\pi X}0 ,X,t)$.
	
	\item \label{autxmod}
	Every crossed module $(A,B,\alpha)$ is a lifting of the automorphism crossed module $(A,\Aut(A),\iota)$ over the action of $B$ on $A$, i.e. $\theta\colon B\rightarrow \Aut(A)$, $\theta(b)(a)=b\cdot a$.
	
	\item \label{natlift}
	Every crossed module $(A,B,\alpha)$ lifts to the crossed module $(A,A/N,p)$ over $\omega\colon A/N\rightarrow B$, $a+N\mapsto \alpha(a)$ where $N=\ker \alpha$.
\end{enumerate}

\section{Results on Liftings of crossed modules}

\subsection{Pullback liftings}

Mucuk and \c{S}ahan \cite{Mu-Tu-Ar} gave the following theorem as a criterion for the existence of the lifting crossed module.

\begin{theorem}\cite[Theorem 4.5]{Mu-Tu-Ar}\label{existlift}
	Let $(A,B,\alpha)$ be a crossed module and $C$ be a subgroup of $\ker \alpha$. Then there exist a lifting $(\varphi,X,\omega)$ of $\alpha$ such that $\ker \varphi=C$. Moreover in this case $\ker\omega=\ker \alpha / C$.
\end{theorem}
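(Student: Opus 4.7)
The plan is to construct the lifting explicitly by a quotient of $A$ itself. Since $C\subseteq \ker\alpha$ and $\ker\alpha\subseteq Z(A)$ by the proposition recalled above, the subgroup $C$ is central, hence normal in $A$, so the quotient group $X:=A/C$ makes sense. I would then set $\varphi\colon A\to X$ to be the canonical projection $\varphi(a)=a+C$, and define $\omega\colon X\to B$ by $\omega(a+C)=\alpha(a)$, which is well defined precisely because $C\subseteq\ker\alpha$. By construction $\omega\varphi=\alpha$, so the required triangle commutes.

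Next I would equip $A$ with an $X$-action via $\omega$, namely $(a+C)\cdot a_1:=\alpha(a)\cdot a_1=a+a_1-a$, the last equality using axiom \ref{CM2} of the original crossed module $(A,B,\alpha)$. This is well defined because $C\subseteq\ker\alpha$. Then I would verify that $(A,X,\varphi)$ is a crossed module: axiom \ref{CM2} is immediate from the computation above, and for \ref{CM1} I would compute
\[
\varphi\bigl((a+C)\cdot a_1\bigr)=\varphi(a+a_1-a)=(a+a_1-a)+C=(a+C)+\varphi(a_1)-(a+C),
\]
which is the desired identity in $X=A/C$. This makes $(\varphi,X,\omega)$ a lifting of $\alpha$ in the sense of the definition recalled above.

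Finally, the two kernel identifications are essentially by inspection: $\ker\varphi=C$ holds by the definition of the projection, and
\[
\ker\omega=\{a+C\in A/C\mid \alpha(a)=0\}=\ker\alpha/C,
\]
where the quotient makes sense because $C\subseteq\ker\alpha$. I do not foresee any real obstacle; the only point that requires a moment of care is checking that $C$ is automatically normal in $A$ (which follows from centrality of $\ker\alpha$) and that the $X$-action on $A$ is well defined on cosets (which again uses $C\subseteq\ker\alpha$). The verification of the crossed module axioms is a direct translation of those already satisfied by $(A,B,\alpha)$.
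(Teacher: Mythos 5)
Your construction is correct and is essentially the approach behind the quoted result: the lifting is realized as the quotient $X=A/C$ (legitimate since $C\subseteq\ker\alpha\subseteq Z(A)$) with $\varphi$ the canonical projection and $\omega(a+C)=\alpha(a)$, which is exactly the generalization of example (iv) in the preliminaries from $C=\ker\alpha$ to an arbitrary subgroup $C$ of $\ker\alpha$. All the points you flag (normality of $C$, well-definedness of $\omega$ and of the induced $X$-action on $A$, the two crossed module axioms, and the kernel identifications $\ker\varphi=C$ and $\ker\omega=\ker\alpha/C$) check out as you describe.
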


Every crossed module $(A,B,\alpha)$ lifts to itself over the identity morphism $1_B$ on $B$. Let $(\varphi,X,\omega)$ and $(\varphi',X',\omega')$ be two liftings of $(A,B,\alpha)$. A morphism $f$ from $(\varphi,X,\omega)$ to $(\varphi',X,\omega')$ is a group homomorphism $f\colon X\rightarrow X'$ such that $f\varphi=\varphi'$ and $\omega'f=\omega$. The category of liftings of a crossed module $(A,B,\alpha)$ is denoted by $\LXM{\bm{\mathsf{(A,B,\alpha)}}}$.

Now we will define the pullback of a lifting crossed module over a crossed module morphism with codomain the base crossed module.

\begin{proposition}\label{pulllift}
	Let $(A,B,\alpha)$, $(\w{A},\w{B},\w{\alpha})$ be two crossed modules, $(\varphi,X,\omega)$ a lifting of $(A,B,\alpha)$ and $(f,g)\colon(\w{A},\w{B},\w{\alpha})\rightarrow (A,B,\alpha)$ a crossed module morphism. Then $(\psi,X{_{\omega}\times_{g}}\w{B},\pi_2)$ is a lifting of $(\w{A},\w{B},\w{\alpha})$, where $\psi=(\varphi f,\w{\alpha})$ and $X{_{\omega}\times_{g}}\w{B}$ is the pullback group of $X$ and $\w{B}$. Moreover $(f,\pi_1)\colon(\w{A},X{_{\omega}\times_{g}}\w{B},\psi)\rightarrow (A,X,\varphi)$ is a crossed module morphism.
\end{proposition}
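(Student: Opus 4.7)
The proof is essentially structural bookkeeping, so the plan is to unpack and verify the four assertions hidden in the statement, in order.

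First, check that $\psi(\w{a}):=(\varphi f(\w{a}),\w{\alpha}(\w{a}))$ genuinely lands in the pullback $X{_{\omega}\times_{g}}\w{B}$. This reduces to the equality $\omega\varphi f=g\w{\alpha}$, which follows by composing the two hypotheses $\omega\varphi=\alpha$ (since $(\varphi,X,\omega)$ is a lifting) and $\alpha f=g\w{\alpha}$ (since $(f,g)$ is a crossed module morphism). Since $\varphi f$ and $\w{\alpha}$ are both group homomorphisms into the two factors, $\psi$ is automatically a homomorphism into the pullback, and the commutativity $\pi_2\psi=\w{\alpha}$ is immediate from the second coordinate.

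Second, give $\w{A}$ an $X{_{\omega}\times_{g}}\w{B}$-action via $\pi_2$, by $(x,\w{b})\cdot\w{a}:=\w{b}\cdot\w{a}$, and verify the crossed module axioms for $\psi$. Axiom \textbf{CM2} follows from \textbf{CM2} for the base $(\w{A},\w{B},\w{\alpha})$ since the action only sees the second coordinate. The only nontrivial step is \textbf{CM1}: expanding $\psi(\w{b}\cdot\w{a})$ coordinatewise, the first slot becomes $\varphi(g(\w{b})\cdot f(\w{a}))$ by equivariance of $(f,g)$; the pullback condition $\omega(x)=g(\w{b})$ then rewrites this as $\varphi(x\cdot f(\w{a}))$, and \textbf{CM1} for $(A,X,\varphi)$ yields $x+\varphi f(\w{a})-x$. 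The second slot is $\w{\alpha}(\w{b}\cdot\w{a})=\w{b}+\w{\alpha}(\w{a})-\w{b}$ by \textbf{CM1} for the base. Combining the two slots gives exactly $(x,\w{b})+\psi(\w{a})-(x,\w{b})$ in $X{_{\omega}\times_{g}}\w{B}$.

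Finally, for the morphism $(f,\pi_1)$: the identity $\pi_1\psi=\varphi f$ is visible from the definition of $\psi$, and equivariance collapses to the chain $f(\w{b}\cdot\w{a})=g(\w{b})\cdot f(\w{a})=\omega(x)\cdot f(\w{a})=x\cdot f(\w{a})$. The one step doing any real work throughout the proof is the use of the pullback compatibility $\omega(x)=g(\w{b})$ to bridge the $g$-action on $A$ used by the morphism $(f,g)$ with the $\omega$-action used by the lifting $(\varphi,X,\omega)$; that bridging is precisely why the pullback group is the natural choice of lifting domain, so I expect no genuine obstacle beyond keeping track of which action is acting on which copy of $A$.
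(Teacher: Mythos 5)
Your proposal is correct and follows essentially the same route as the paper: the same action $(x,\w{b})\cdot\w{a}=\w{b}\cdot\w{a}$ via $\pi_2$, and the same \textbf{CM1} computation pivoting on the pullback compatibility $\omega(x)=g(\w{b})$. You are in fact somewhat more thorough than the paper, which dismisses the well-definedness of $\psi$ into the pullback and the morphism property of $(f,\pi_1)$ as ``straightforward'' details that you spell out explicitly.
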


\begin{proof}
	By the definition of $\psi$, it is easy to see that $\pi_2\psi=\w{\alpha}$. Hence, we only need to show that $(\w{A},X{_{\omega}\times_{g}}\w{B},\psi)$ is a crossed module, where the action of $X{_{\omega}\times_{g}}\w{B}$ on $\w{A}$ given by $(x,\w{b})\cdot \w{a}=\w{b}\cdot \w{a}$.
	\begin{enumerate}[label=\textbf{CM\arabic{*})}, leftmargin=2cm]
		\item\label{CM1} Let $(x,\w{b})\in X{_{\omega}\times_{g}}\w{B}$ and $\w{a}\in \w{A}$. Then,
		\begin{align*}
		\psi((x,\w{b})\cdot \w{a}) &= \psi(\w{b}\cdot \w{a}) \\
		&= (\varphi f,\w{\alpha})(\w{b}\cdot \w{a}) \\
		&= (\varphi f(\w{b}\cdot \w{a}),\w{\alpha}(\w{b}\cdot \w{a})) \\
		&= (\varphi(g(\w{b})\cdot f(\w{a})) ,\w{b}+\w{\alpha}(\w{a})-\w{b}) \tag{since $g(\w{b})=\omega(x)$} \\
		&= (\varphi(\omega(x)\cdot f(\w{a})) ,\w{b}+\w{\alpha}(\w{a})-\w{b}) \\
		&= (\varphi(x\cdot f(\w{a})) ,\w{b}+\w{\alpha}(\w{a})-\w{b}) \\
		&= (x+\varphi(f(\w{a}))-x ,\w{b}+\w{\alpha}(\w{a})-\w{b}) \\
		&= (x,\w{b})+(\varphi(f(\w{a})),\w{\alpha}(\w{a}))-(x,\w{b}) \\
		&= (x,\w{b})+\psi(\w{a})-(x,\w{b}).
		\end{align*}
		\item\label{CM2} Let $\w{a},\w{a_1}\in \w{A}$ and $\w{a}\in \w{A}$. Then,
		\begin{align*}
		\psi(\w{a_1})\cdot \w{a} &= ((\varphi f,\w{\alpha})(\w{a_1}))\cdot \w{a} \\
		&= (\varphi f(\w{a_1}),\w{\alpha}(\w{a_1}))\cdot \w{a} \\
		&= \w{\alpha}(\w{a_1})\cdot \w{a} \\
		&= \w{a_1}+\w{a}-\w{a_1}.
		\end{align*}
	\end{enumerate}
	Thus $(\w{A},X{_{\omega}\times_{g}}\w{B},\psi)$ is a crossed module, i.e., $(\psi,X{_{\omega}\times_{g}}\w{B},\pi_2)$ is a lifting of $(\w{A},\w{B},\w{\alpha})$. Other details are straightforward.
\end{proof}

\[\xymatrix@!0@C=16mm@R=16mm{
	& & X \ar@{->}[dd]^-{\omega}  & \\	
	&     &  & X{_{\omega}\times_{g}}\w{B} \ar[dd]^{\pi_2} \ar[ul]_-{\pi_1}
	\\
	A \ar@{-->}[uurr]^{\varphi}\ar@{->}[rr]^-{\alpha}   & & B & \\	
	& \w{A} \ar@{..>}'[ur]|-{~~\psi=(\varphi f,\w{\alpha})~~}[uurr] \ar[ul]^-{f} \ar[rr]_{\w{\alpha}}   &  & \w{B} \ar[ul]_-{g}
}\qquad
\xymatrix@!0@C=16mm@R=16mm{
	& & X \ar@{->}[dd]^-{\omega} \ar@{->}[rr]^-{h}  & & X' \ar@{->}'[d][dd]^-{\omega'} \\	
	&     &  & X{_{\omega}\times_{g}}\w{B} \ar@{->}[rr]^(.4){h\times 1} \ar[dd]^(.7){\pi_2} \ar[ul]_-{\pi_1}
	& & X'{_{\omega'}\times_{g}}\w{B} \ar[dd]^-{\pi_2} \ar[ul]_-{\pi_1}\\
	A \ar@{-->}[uurr]^{\varphi}\ar@{->}[rr]^-{\alpha}   & & B   \ar@{=}'[r][rr] & & B  \\	
	& \w{A} \ar@{..>}'[ur]|-{~~\psi=(\varphi f,\w{\alpha})~~}[uurr] \ar[ul]^-{f} \ar[rr]_{\w{\alpha}}   &  & \w{B} \ar[ul]_-{g} \ar@{=}[rr] & & \w{B} \ar[ul]_-{g}
}\]

The lifting $(\psi,X{_{\omega}\times_{g}}\w{B},\pi_2)$ constructed in the previous proposition is called \textit{pullback lifting of} $(\varphi,X,\omega)$ \textit{over} $(f,g)$. If $(\varphi',X',\omega')$ is another lifting of $(A,B,\alpha)$ and $h\colon (\varphi,X,\omega)\rightarrow (\varphi',X',\omega')$ a morphism of liftings, then $(\psi',X'{_{\omega'}\times_{g}}\w{B},\pi_2)$ is also a lifting of $(\w{A},\w{B},\w{\alpha})$ and $h\times 1\colon (\psi,X{_{\omega}\times_{g}}\w{B},\pi_2)\rightarrow (\psi',X'{_{\omega'}\times_{g}}\w{B},\pi_2)$ is a morphism of liftings.

This construction is functorial, that is, defines a functor
\[\lambda^{\ast}_{(f,g)}\colon \LXM{\bm{\mathsf{(A,B,\alpha)}}} \rightarrow \LXM{\bm{\mathsf{(\w{A},\w{B},\w{\alpha})}}}\]
given by $\lambda^{\ast}_{(f,g)}((\varphi,X,\omega))=(\psi,X{_{\omega}\times_{g}}\w{B},\pi_2)$ on objects and by $\lambda^{\ast}_{(f,g)}(h)=h\times 1$ on morphisms, for each crossed module morphism $(f,g)\colon(\w{A},\w{B},\w{\alpha})\rightarrow (A,B,\alpha)$. Using the equivalence of the categories of liftings of a crossed module and of group-groupoid actions, we will define the corresponding notion in the category of group-groupoid actions.

\begin{proposition}
	Let $G$ and $\w{G}$ be two group-groupoids and $G$ acts on a group $X$ via a group homomorphism $\omega\colon X\rightarrow \Ob(G)$. If there is a group-groupoid morphism $f=(f_1,f_0)\colon \w{G}\rightarrow G$ then, $\w{G}$ acts on the pullback group $X{_{\omega}\times_{f_0}}\Ob(\w{G})$ via $\pi_2$.
\end{proposition}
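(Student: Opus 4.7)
The plan is to define the candidate action explicitly and verify the axioms; alternatively, one can derive the statement by transporting Proposition~\ref{pulllift} across the Brown--Spencer equivalence together with the equivalence $\LXM{\bm{\mathsf{(A,B,\alpha)}}}\simeq \GGdA(G)$ recalled in the introduction. For a direct and self-contained argument I would proceed as follows.

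First, I would set
\[
\w{g}\bullet(x,\w{y}) \ =\ \bigl(f_1(\w{g})\bullet x,\ d_1(\w{g})\bigr),
\]
for $(x,\w{y})\in X{_{\omega}\times_{f_0}}\Ob(\w{G})$ and $\w{g}\in\w{G}$ with $d_0(\w{g})=\w{y}=\pi_2(x,\w{y})$. Well-definedness is the first check: since $f$ is a groupoid morphism, $d_0(f_1(\w{g}))=f_0(d_0(\w{g}))=f_0(\w{y})=\omega(x)$, so the $G$-action $f_1(\w{g})\bullet x$ is defined, and
\[
\omega(f_1(\w{g})\bullet x)\ =\ d_1(f_1(\w{g}))\ =\ f_0(d_1(\w{g})),
\]
which is exactly the pullback condition needed to ensure $(f_1(\w{g})\bullet x, d_1(\w{g}))\in X{_{\omega}\times_{f_0}}\Ob(\w{G})$.

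Second, I would verify the three groupoid action axioms. Compatibility with $\pi_2$ is immediate from $\pi_2(\w{g}\bullet(x,\w{y}))=d_1(\w{g})$. The identity axiom $1_{\w{y}}\bullet(x,\w{y})=(x,\w{y})$ follows from $f_1(1_{\w{y}})=1_{f_0(\w{y})}=1_{\omega(x)}$ and the identity axiom of the $G$-action on $X$. The associativity axiom $(\w{h}\circ\w{g})\bullet(x,\w{y})=\w{h}\bullet(\w{g}\bullet(x,\w{y}))$ reduces to $f_1(\w{h}\circ\w{g})=f_1(\w{h})\circ f_1(\w{g})$ together with associativity of the $G$-action.

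Third, for the group-groupoid structure, I would check the interchange law
\[
(\w{g}\bullet(x,\w{y})) + (\w{g'}\bullet(x',\w{y'})) \ =\ (\w{g}+\w{g'})\bullet\bigl((x,\w{y})+(x',\w{y'})\bigr),
\]
which, once the addition in the pullback group is unpacked componentwise, reduces on the first coordinate to the interchange law of the given $G$-action on $X$ applied to $(f_1(\w{g}),x)$ and $(f_1(\w{g'}),x')$, and on the second coordinate to $d_1$ being a group homomorphism and $f_1$ preserving sums. The main obstacle is purely bookkeeping: one has to keep track simultaneously of the pullback compatibility $\omega\circ\pi_1=f_0\circ\pi_2$, of the fact that $f$ preserves source, target, composition and sum, and of the two action rules being checked; no genuinely new calculation appears, because each axiom transfers verbatim along $f_1$ from the original action.
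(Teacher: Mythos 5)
Your proof is correct and follows the route the paper intends: the paper's own ``proof'' of this proposition is only the one-line remark that it is similar to the proof of Proposition~\ref{pulllift}, i.e.\ a direct verification of the action axioms in the group-groupoid setting, which is exactly what you carry out. Your explicit formula $\w{g}\bullet(x,\w{y})=\bigl(f_1(\w{g})\bullet x,\, d_1(\w{g})\bigr)$, together with the checks of well-definedness via $\omega(f_1(\w{g})\bullet x)=f_0(d_1(\w{g}))$, the identity and composition axioms, and the interchange law, supplies precisely the details the paper omits.
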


\begin{proof}
	It is similar to the proof of Proposition \ref{pulllift}.
\end{proof}

Action defined above is called \textit{pullback action of} $(X,\omega)$ \textit{over} $f$. We also obtain a pullback functor as in the case of liftings:

\[\lambda^{\ast}_{f}\colon \GGdA(G) \rightarrow \GGdAct(GT)\]
given by $\lambda^{\ast}_{f}((X,\omega))=(X{_{\omega}\times_{f_0}}\Ob(\w{G}),\pi_2)$ on objects and by $\lambda^{\ast}_{f}(h)=h\times 1$ on morphisms, for each group-groupoid morphism $f\colon\w{G}\rightarrow G$.

\subsection{Homotopy lifting property}

Following theorem, given in \cite{Mu-Tu-Ar}, states the conditions for when a crossed module morphism lifts to a lifting of a crossed module.

\begin{theorem}\cite[Theorem 4.3]{Mu-Tu-Ar}\label{morplift}
	Let $(f,g)\colon(\widetilde{A},\widetilde{B},\widetilde{\alpha})\rightarrow(A,B,\alpha)$ be a morphism of crossed modules  where $(\widetilde{A},\widetilde{B},\widetilde{\alpha})$ is transitive and let $(\varphi,X,\omega)$ be a lifting of $(A,B,\alpha)$. Then there is a unique morphism of crossed modules $(f,\widetilde{g})\colon(\widetilde{A},\widetilde{B},\widetilde{\alpha})\rightarrow(A,X,\varphi)$ such that $\omega\widetilde{g}=g$ if and only if $f(\ker \widetilde{\alpha})\subseteq\ker \varphi$.
\end{theorem}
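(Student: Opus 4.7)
The plan is to split into the two directions and let transitivity of $(\widetilde{A},\widetilde{B},\widetilde{\alpha})$, i.e., surjectivity of $\widetilde{\alpha}$, do the work of transporting the definition of $\widetilde{g}$ from $\widetilde{A}$ up to $\widetilde{B}$.

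For necessity, I would simply chase the diagram. If a crossed module morphism $(f,\widetilde{g})\colon(\widetilde{A},\widetilde{B},\widetilde{\alpha})\to(A,X,\varphi)$ with $\omega\widetilde{g}=g$ exists, then $\widetilde{g}\,\widetilde{\alpha}=\varphi f$ as the base square of a crossed module morphism; hence for any $\widetilde{a}\in\ker\widetilde{\alpha}$, $\varphi f(\widetilde{a})=\widetilde{g}(\widetilde{\alpha}(\widetilde{a}))=\widetilde{g}(0)=0$, so $f(\widetilde{a})\in\ker\varphi$. This gives $f(\ker\widetilde{\alpha})\subseteq\ker\varphi$ with essentially no calculation.

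For sufficiency, assume $f(\ker\widetilde{\alpha})\subseteq\ker\varphi$. Since $\widetilde{\alpha}$ is surjective by transitivity, every $\widetilde{b}\in\widetilde{B}$ can be written $\widetilde{b}=\widetilde{\alpha}(\widetilde{a})$ for some $\widetilde{a}\in\widetilde{A}$, and I would define $\widetilde{g}(\widetilde{b}):=\varphi f(\widetilde{a})$. The first step is well-definedness: if $\widetilde{\alpha}(\widetilde{a}_1)=\widetilde{\alpha}(\widetilde{a}_2)$, then $\widetilde{a}_1-\widetilde{a}_2\in\ker\widetilde{\alpha}$, and the hypothesis together with $f$ and $\varphi$ being homomorphisms forces $\varphi f(\widetilde{a}_1)=\varphi f(\widetilde{a}_2)$. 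Group-homomorphism follows directly because $\widetilde{\alpha}$, $f$, $\varphi$ are all homomorphisms. The relation $\omega\widetilde{g}=g$ follows from the two identities $\omega\varphi=\alpha$ (lifting condition) and $g\widetilde{\alpha}=\alpha f$ ($(f,g)$ being a crossed module morphism), which together give $\omega\widetilde{g}(\widetilde{\alpha}(\widetilde{a}))=\alpha f(\widetilde{a})=g(\widetilde{\alpha}(\widetilde{a}))$ for every $\widetilde{a}$, hence on all of $\widetilde{B}$ by surjectivity.

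It remains to verify that $(f,\widetilde{g})$ is a crossed module morphism into $(A,X,\varphi)$. The square $\widetilde{g}\widetilde{\alpha}=\varphi f$ holds by the very definition of $\widetilde{g}$. For equivariance, $f(\widetilde{b}\cdot\widetilde{a})=\widetilde{g}(\widetilde{b})\cdot f(\widetilde{a})$, I would write $\widetilde{b}=\widetilde{\alpha}(\widetilde{a}')$, apply \ref{CM2} in $(\widetilde{A},\widetilde{B},\widetilde{\alpha})$ to rewrite $\widetilde{b}\cdot\widetilde{a}=\widetilde{a}'+\widetilde{a}-\widetilde{a}'$, push through the homomorphism $f$, and use \ref{CM2} in $(A,B,\alpha)$ backwards along with the fact that the action of $X$ on $A$ is defined through $\omega$, so $\widetilde{g}(\widetilde{b})\cdot f(\widetilde{a})=\omega\widetilde{g}(\widetilde{b})\cdot f(\widetilde{a})=g(\widetilde{b})\cdot f(\widetilde{a})=\alpha f(\widetilde{a}')\cdot f(\widetilde{a})$. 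Finally, uniqueness is immediate: any crossed module morphism $(f,\widetilde{g}')$ with the stated property must satisfy $\widetilde{g}'\widetilde{\alpha}=\varphi f$, so $\widetilde{g}'$ is determined on $\widetilde{\alpha}(\widetilde{A})=\widetilde{B}$, which is all of $\widetilde{B}$ by transitivity. I expect the only mild subtlety to be bookkeeping the two distinct uses of \ref{CM2} and the fact that the $X$-action on $A$ factors through $\omega$; everything else is routine diagram chasing enabled by the surjectivity of $\widetilde{\alpha}$.
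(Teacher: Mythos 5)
Your proposal is correct and follows exactly the construction the paper indicates: the paper quotes this theorem from \cite{Mu-Tu-Ar} without reproducing the proof, but the remark immediately after the statement records precisely your formula $\w{g}(\w{b})=\varphi f(\w{a})$ for $\w{\alpha}(\w{a})=\w{b}$, and your verification of well-definedness, the identity $\omega\w{g}=g$, equivariance via \ref{CM2}, and uniqueness from surjectivity of $\w{\alpha}$ is the standard completion of that construction.
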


Note that, in the above theorem, since $(\widetilde{A},\widetilde{B},\widetilde{\alpha})$ is transitive then $\widetilde{\alpha}$ is surjective and there exists an $\widetilde{a}\in \widetilde{A}$ such that $\widetilde{\alpha}(\widetilde{a})=\widetilde{b}$ for all $\w{b}\in\w{B}$. So the morphism $\widetilde{g}$ is given by
\[\begin{array}{rcccl}
\w{g}&\colon&\w{B} & \longrightarrow & B \\
& & \w{b} & \longmapsto & \w{g}(\w{b})=\varphi f(\w{a})
\end{array}\label{gtilde}\]
for all $\w{b}\in\w{B}$.

According to Theorem \ref{morplift}, now we prove that liftings of homotopic crossed module morphisms as in Theorem \ref{morplift} are homotopic. This property will be called by homotopy lifting property. First we remind the notion of homotopy of crossed module morphisms.

\begin{definition}\label{homxmodmor}
	Let $(f_1,g_2),(f_2,g_2)\colon(\widetilde{A},\widetilde{B},\widetilde{\alpha})\rightarrow(A,B,\alpha)$ be two crossed module morphisms. A homotopy from $(f_1,g_1)$ to $(f_2,g_2)$ is a function $d\colon \w{B}\rightarrow A$ satisfying
	\begin{enumerate}[label=\textbf{(H\arabic{*})}, leftmargin=2cm]
		\item $d(\w{b_1}+\w{b_2})=d(\w{b_1})+g_1(\w{b_1})\cdot d(\w{b_2})$,
		\item $d \w{\alpha}(\w{a})=f_1(\w{a})-f_2(\w{a})$ and
		\item $\alpha d(\w{b})=g_1(\w{b})-g_2(\w{b})$
	\end{enumerate}
	for all $\w{b_1},\w{b_2}\in\w{B}$ and $\w{a}\in\w{A}$.
\end{definition}

If $d\colon \w{B}\rightarrow A$ is a homotopy from $(f_1,g_1)$ to $(f_2,g_2)$ then, this is denoted by $d\colon(f_1,g_1)\simeq(f_2,g_2)$.

\begin{theorem}[\textbf{Homotopy Lifting Property}]\label{homliftprop}
	Let $(f_1,g_1),(f_2,g_2) \colon (\widetilde{A},\widetilde{B},\widetilde{\alpha}) \rightarrow (A,B,\alpha)$ be morphisms of crossed modules such that $f_1(\ker \widetilde{\alpha})\subseteq\ker \varphi$, $f_2(\ker \widetilde{\alpha})\subseteq\ker \varphi$ and $(\widetilde{A},\widetilde{B},\widetilde{\alpha})$ is transitive. Let $(\varphi,X,\omega)$ be a lifting of $(A,B,\alpha)$. By Theorem \ref{morplift} there exist crossed module morphisms $(f_1,\widetilde{g_1}),(f_2,\widetilde{g_2})\colon(\widetilde{A},\widetilde{B},\widetilde{\alpha})\rightarrow(A,X,\varphi)$ such that $\omega\widetilde{g_1}=g_1$ and $\omega\widetilde{g_2}=g_2$. In this case, if $d\colon(f_1,g_1)\simeq(f_2,g_2)$, then $d\colon(f_1,\widetilde{g_1})\simeq(f_2,\widetilde{g_2})$.
\end{theorem}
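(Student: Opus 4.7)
The plan is to verify directly that the same function $d\colon\w{B}\to A$ satisfies the three homotopy axioms \textbf{(H1)}, \textbf{(H2)}, \textbf{(H3)} of Definition~\ref{homxmodmor}, but now with respect to the pair $(f_1,\widetilde{g_1}),(f_2,\widetilde{g_2})\colon (\widetilde{A},\widetilde{B},\widetilde{\alpha})\to (A,X,\varphi)$. Two of the three axioms will fall out almost for free from the relation $\omega\widetilde{g_i}=g_i$, and only \textbf{(H3)} will require genuine work, using transitivity of $(\widetilde{A},\widetilde{B},\widetilde{\alpha})$ together with the explicit formula for $\widetilde{g_i}$ recalled after Theorem~\ref{morplift}.

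First, \textbf{(H2)} is trivial: the condition $d\widetilde{\alpha}(\widetilde{a}) = f_1(\widetilde{a})-f_2(\widetilde{a})$ involves only $f_1,f_2$, which are unchanged when passing to the lifting. For \textbf{(H1)}, I would use that the action of $X$ on $A$ in the lifting crossed module $(A,X,\varphi)$ is by definition the pulled-back action via $\omega$, so that for any $x\in X$ and $a\in A$ one has $x\cdot a = \omega(x)\cdot a$. Applying this to $x=\widetilde{g_1}(\widetilde{b_1})$ and using $\omega\widetilde{g_1}=g_1$ gives $\widetilde{g_1}(\widetilde{b_1})\cdot d(\widetilde{b_2}) = g_1(\widetilde{b_1})\cdot d(\widetilde{b_2})$, so \textbf{(H1)} for the lifted data reduces to \textbf{(H1)} for the original homotopy.

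The main obstacle is \textbf{(H3)}, which asks for the identity $\varphi d(\widetilde{b}) = \widetilde{g_1}(\widetilde{b}) - \widetilde{g_2}(\widetilde{b})$ in the group $X$, whereas we only know $\alpha d(\widetilde{b}) = g_1(\widetilde{b}) - g_2(\widetilde{b})$ in $B$. Here I would invoke the transitivity of $(\widetilde{A},\widetilde{B},\widetilde{\alpha})$: given $\widetilde{b}\in\widetilde{B}$, pick $\widetilde{a}\in\widetilde{A}$ with $\widetilde{\alpha}(\widetilde{a})=\widetilde{b}$. By the explicit description of $\widetilde{g_i}$ recalled immediately after Theorem~\ref{morplift}, we have $\widetilde{g_i}(\widetilde{b}) = \varphi f_i(\widetilde{a})$ for $i=1,2$, and therefore
\[
\widetilde{g_1}(\widetilde{b}) - \widetilde{g_2}(\widetilde{b}) \;=\; \varphi f_1(\widetilde{a}) - \varphi f_2(\widetilde{a}) \;=\; \varphi\bigl(f_1(\widetilde{a}) - f_2(\widetilde{a})\bigr).
\]
Applying \textbf{(H2)} of the given homotopy $d$ yields $f_1(\widetilde{a}) - f_2(\widetilde{a}) = d\widetilde{\alpha}(\widetilde{a}) = d(\widetilde{b})$, and substituting back gives $\widetilde{g_1}(\widetilde{b}) - \widetilde{g_2}(\widetilde{b}) = \varphi d(\widetilde{b})$, which is exactly \textbf{(H3)}.

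A small bookkeeping point I would address at the end is well-definedness: the value $\varphi f_i(\widetilde{a})$ a priori depends on the choice of preimage $\widetilde{a}$ of $\widetilde{b}$, but this is already built into Theorem~\ref{morplift} via the hypothesis $f_i(\ker\widetilde{\alpha})\subseteq\ker\varphi$, so no new verification is needed. With \textbf{(H1)}, \textbf{(H2)} and \textbf{(H3)} established, $d$ is a homotopy $(f_1,\widetilde{g_1})\simeq (f_2,\widetilde{g_2})$, completing the proof.
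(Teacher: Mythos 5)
Your proposal is correct and follows essentially the same route as the paper: \textbf{(H1)} and \textbf{(H2)} are inherited from the base homotopy (the paper asserts this in one line, while you spell out the reason for \textbf{(H1)}, namely that the $X$-action on $A$ is defined via $\omega$), and \textbf{(H3)} is verified by choosing a preimage $\widetilde{a}$ of $\widetilde{b}$ under the surjective $\widetilde{\alpha}$ and combining \textbf{(H2)} with the explicit formula $\widetilde{g_i}(\widetilde{b})=\varphi f_i(\widetilde{a})$ --- exactly the computation in the paper, read in the opposite direction.
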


\begin{proof} Let $d\colon \w{B}\rightarrow A$ be a homotopy, i.e., $d\colon (f_1,g_1)\simeq(f_2,g_2)$. Then for all $\w{b_1},\w{b_2}\in\w{B}$ and $\w{a}\in\w{A}$, \textbf{(H1)} $d(\w{b_1}+\w{b_2})=d(\w{b_1})+g_1(\w{b_1})\cdot d(\w{b_2})$, \textbf{(H2)} $d \w{\alpha}(\w{a})=f_1(\w{a})-f_2(\w{a})$ and \textbf{(H3)} $\alpha d(\w{b})=g_1(\w{b})-g_2(\w{b})$. Now we need to show that $d\colon \w{B}\rightarrow A$ is a homotopy from $(f_1,\widetilde{g_1})$ to $(f_2,\widetilde{g_2})$, i.e., $d\colon(f_1,\widetilde{g_1})\simeq(f_2,\widetilde{g_2})$. First two conditions of Definition \ref{homxmodmor} hold, since they are same with base homotopy. Hence, we only need to show that the last condition holds.
	\begin{enumerate}[leftmargin=2cm]
		\item[\textbf{(H3)}] Let $\w{b}\in\w{B}$. Since $\w{\alpha}$ is surjective there exist an element $\w{a}$ such that $\w{\alpha}(\w{a})=\w{b}$. Then,
		\begin{align*}
		\varphi d(\w{b}) &= \varphi(d(\w{\alpha}(\w{a}))) \tag{by \textbf{(H2)}} \\
		&=  \varphi(f_1(\w{a})-f_2(\w{a}))\\
		&= \varphi f_1(\w{a})-\varphi f_2(\w{a}) \tag{by definition of $\w{g_1}$ and $\w{g_2}$}\\
		&= \w{g_1}(\w{b})-\w{g_2}(\w{b}).
		\end{align*}
	\end{enumerate}
	Hence $d\colon \w{B}\rightarrow A$ is a homotopy from $(f_1,\widetilde{g_1})$ to $(f_2,\widetilde{g_2})$.
\end{proof}

\[\xymatrix@!0@C=15mm@R=15mm{
	A \ar@{=}[dd] \ar[rr]^{\varphi}   & & X \ar@{->}'[d][dd]^-{\omega}  & \\	
	& \w{A} \ar@{=}[dd] \ar[rr]_<<<<<<<{\w{\alpha}} \ar@<0.7ex>[ul]|-{f_1}\ar@<-0.7ex>[ul]|-{f_2}  &  & \w{B} \ar@{=}[dd] \ar@<0.8ex>[ul]|-{\w{g_1}}\ar@<-0.8ex>[ul]|-{\w{g_2}}  \ar@{->}[ulll]|-{d}
	\\
	A \ar@{->}'[r][rr]^-{\alpha}   & & B & \\	
	& \w{A} \ar[rr]_{\w{\alpha}} \ar@<0.7ex>[ul]|-{f_1}\ar@<-0.7ex>[ul]|-{f_2}  &  & \w{B} \ar@<0.7ex>[ul]|-{g_1}\ar@<-0.7ex>[ul]|-{g_2} \ar[ulll]|-{d}
}\]

\subsection{Derivations for the liftings of crossed modules}

The notion of derivations first appears in the work of Whitehead \cite{Wth3} under the name of crossed homomorphisms (see also \cite{KNor}). Let $(A,B,\alpha)$ be a crossed module. All maps $d\colon B\rightarrow A$ such that for all $b,b_1\in B$ \[d(b+b_1)=d(b)+b\cdot d(b_1)\] are called \emph{derivations} of $(A,B,\alpha)$. Any such derivation defines endomorphisms $\theta_d$ and $\sigma_d$ on $A$ and $B$ respectively, as follows:
\[\theta_d(a)=d\alpha(a)+a  \quad \text{and} \quad \sigma_d(b)=\alpha d(b)+b\]

\[\xymatrix{
	A \ar@(ul,dl)[]|{\theta_d} \ar@<-.5ex>[rr]_\alpha
	&& B \ar@/_/[ll]_{d} \ar@(ur,dr)[]|{\sigma_d}}\]

It is easy to see that $(\theta_d,\sigma_d)\colon (A,B,\alpha)\rightarrow (A,B,\alpha)$ is a crossed module morphism such that $d\colon(\theta_d,\sigma_d)\simeq(1_A,1_B)$ and \[\theta_d(d(b))=d(\sigma_d(b))\] for all $b\in B$.


All derivations from $B$ to $A$ are denoted by $\Der(B,A)$. Whitehead defined a multiplication on $\Der(B,A)$ as follows: Let $d_1,d_1\in\Der(B,A)$ then $d=d_1\circ d_2$ where

\[d(b)=d_1\sigma_{d_2}(b)+d_2(b) \ \ \ \ \	(=\theta_{d_1}d_2(b)+d_1(b)).\]

With this multiplication $(\Der(B,A),\circ)$ becomes a semi-group where the identity derivation is zero morphism, i.e. $d\colon B\rightarrow A, b\mapsto d(b)=0$. Furthermore $\theta_d=\theta_{d_1}\theta_{d_2}$ and $\sigma_d=\sigma_{d_1}\sigma_{d_2}$. Group of units of $\Der(B,A)$ is called \emph{Whitehead group} and denoted by $\D(B,A)$. These units are called \emph{regular derivations}. Following proposition is a combined result from \cite{Wth3} and \cite{Lue} to characterize the regular derivations.

\begin{proposition}\cite{KNor}
	Let $(A,B,\alpha)$ be a crossed module. Then the followings are equivalent.
	\begin{enumerate}[label=\textbf{(\roman{*})}, leftmargin=1cm]
		\item $d\in\D(B,A)$;
		\item $\theta\in\Aut A$;
		\item $\sigma\in\Aut B$.
	\end{enumerate}
\end{proposition}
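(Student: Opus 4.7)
My plan is to establish the three equivalences in the cycle (i)$\Rightarrow$(ii), (ii)$\Leftrightarrow$(iii), (iii)$\Rightarrow$(i), exploiting both the monoid structure on $\Der(B,A)$ and the crossed module axioms.

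The implication (i)$\Rightarrow$(ii)\&(iii) is immediate from the structural identities already recorded in the excerpt: since $d$ is a unit there is $d'\in\Der(B,A)$ with $d\circ d'=d'\circ d=0$, and combining $\theta_{d_1\circ d_2}=\theta_{d_1}\theta_{d_2}$, $\sigma_{d_1\circ d_2}=\sigma_{d_1}\sigma_{d_2}$ with $\theta_0=1_A$, $\sigma_0=1_B$ exhibits $\theta_{d'}$ and $\sigma_{d'}$ as two-sided inverses of $\theta_d$ and $\sigma_d$ respectively. So the content of the proposition is the passage from an automorphism on one side back to a unit in $\Der(B,A)$.

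For (ii)$\Leftrightarrow$(iii) I would use the crossed module morphism $(\theta_d,\sigma_d)\colon (A,B,\alpha)\to(A,B,\alpha)$ together with two observations that both follow at once from the axioms: $\theta_d$ restricts to the identity on $\ker\alpha$ (because $d(0)=0$ gives $\theta_d(a)=d\alpha(a)+a=a$ whenever $\alpha(a)=0$), and $\sigma_d$ induces the identity on $B/\alpha(A)$ (because $\sigma_d(b)-b=\alpha d(b)\in\alpha(A)$). The relation $\alpha\theta_d=\sigma_d\alpha$ forces $\sigma_d(\alpha(A))\subseteq\alpha(A)$ and identifies the induced map on $A/\ker\alpha\cong\alpha(A)$ with $\sigma_d|_{\alpha(A)}$. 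Applying the short five-lemma to
\[
\begin{array}{ccccccccc}
0&\to&\ker\alpha&\to&A&\to&\alpha(A)&\to&0\\
&&\downarrow{\scriptstyle 1}&&\downarrow{\scriptstyle \theta_d}&&\downarrow{\scriptstyle \sigma_d|_{\alpha(A)}}&&\\
0&\to&\ker\alpha&\to&A&\to&\alpha(A)&\to&0
\end{array}
\]
and to the analogous sequence $0\to\alpha(A)\to B\to B/\alpha(A)\to 0$ with vertical maps $\sigma_d|_{\alpha(A)},\sigma_d,1$ shows that $\theta_d\in\Aut A$, $\sigma_d|_{\alpha(A)}\in\Aut\alpha(A)$ and $\sigma_d\in\Aut B$ are mutually equivalent.

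Finally, for (iii)$\Rightarrow$(i) I would propose the explicit inverse $d'\colon B\to A$, $d'(b)=-d(\sigma_d^{-1}(b))$. Verifying the derivation identity $d'(b_1+b_2)=d'(b_1)+b_1\cdot d'(b_2)$ is the step I expect to be the main obstacle, because naive expansion only yields $\sigma_d^{-1}(b_1)\cdot d'(b_2)+d'(b_1)$, so one must convert the $\sigma_d^{-1}(b_1)$-action into a $b_1$-action. The trick is to compute $\alpha d'(b_1)=\sigma_d^{-1}(b_1)-b_1$, write $\sigma_d^{-1}(b_1)=\alpha d'(b_1)+b_1$, and invoke axiom \ref{CM2} in the form $\alpha d'(b_1)\cdot(b_1\cdot d'(b_2))=d'(b_1)+b_1\cdot d'(b_2)-d'(b_1)$ to produce the required form. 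Once $d'\in\Der(B,A)$ is confirmed, an application of the identity $\theta_d d=d\sigma_d$ immediately gives $\sigma_{d'}(b)=\sigma_d^{-1}(b)$ and
\[
d\circ d'(b)=d\,\sigma_{d'}(b)+d'(b)=d(\sigma_d^{-1}(b))-d(\sigma_d^{-1}(b))=0,
\]
and symmetrically $d'\circ d(b)=-d(b)+d(b)=0$, so $d\in\D(B,A)$. Combined with (ii)$\Leftrightarrow$(iii), this closes all three implications.
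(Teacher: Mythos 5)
The paper offers no proof of this proposition: it is quoted verbatim from Norrie's paper (as a combination of results of Whitehead and Lue), so there is nothing internal to compare your argument against. Judged on its own, your proof is correct and complete, and it follows the classical route. The easy direction (i)$\Rightarrow$(ii),(iii) via the multiplicativity of $d\mapsto\theta_d$, $d\mapsto\sigma_d$ and $\theta_0=1_A$, $\sigma_0=1_B$ is fine. Your (ii)$\Leftrightarrow$(iii) via the two short exact sequences $0\to\ker\alpha\to A\to\alpha(A)\to 0$ and $0\to\alpha(A)\to B\to B/\alpha(A)\to 0$, using that $\theta_d$ fixes $\ker\alpha$ pointwise and $\sigma_d$ induces the identity on $B/\alpha(A)$, is a clean packaging of the usual diagram chase (valid for non-abelian groups; note it silently uses that $\alpha(A)\trianglelefteq B$, which is Proposition 2.2(i)). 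The construction $d'(b)=-d(\sigma_d^{-1}(b))$ for (iii)$\Rightarrow$(i) is exactly Whitehead's inverse, and your handling of the derivation identity via $\sigma_d^{-1}(b_1)=\alpha d'(b_1)+b_1$ and axiom \textbf{CM2)} is the right (and only nontrivial) step. One small inaccuracy: the identity $\sigma_{d'}=\sigma_d^{-1}$ does not come from $\theta_d d=d\sigma_d$; it follows directly from $\alpha d'(b)=\sigma_d^{-1}(b)-b$, so $\sigma_{d'}(b)=\alpha d'(b)+b=\sigma_d^{-1}(b)$. With that cosmetic fix the argument stands as a self-contained proof of the cited result.
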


Now we will give some results for derivations of crossed modules in the sense of liftings.

\begin{theorem}\label{derlift}
	Let $(A,B,\alpha)$ be a crossed module, $(\varphi,X,\omega)$ a lifting of $(A,B,\alpha)$ and $d\in\Der(B,A)$. Then $\widetilde{d}=d\omega$ is a derivation of $(A,X,\varphi)$, i.e. $\widetilde{d}\in\Der(X,A)$.
\end{theorem}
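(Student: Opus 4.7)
The plan is a short direct verification: unfold $\widetilde{d} = d\omega$ on a sum $x+x_1 \in X$, push $\omega$ through the sum (it is a group homomorphism), invoke the derivation identity for $d$, and then re-identify the $B$-action on $A$ with the induced $X$-action on $A$, which is by definition ``via $\omega$'' in the lifting $(\varphi,X,\omega)$.

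Concretely, I would take arbitrary $x,x_1\in X$ and compute
\[
\widetilde{d}(x+x_1) \;=\; d\bigl(\omega(x+x_1)\bigr) \;=\; d\bigl(\omega(x)+\omega(x_1)\bigr),
\]
using that $\omega$ is a group homomorphism. Then applying the Whitehead derivation identity for $d\in\Der(B,A)$ yields
\[
d\bigl(\omega(x)+\omega(x_1)\bigr) \;=\; d(\omega(x)) \;+\; \omega(x)\cdot d(\omega(x_1)).
\]
Finally, since $(\varphi,X,\omega)$ is a lifting of $(A,B,\alpha)$, the action of $X$ on $A$ is defined through $\omega$, i.e.\ $x\cdot a = \omega(x)\cdot a$. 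Substituting gives
\[
\widetilde{d}(x+x_1) \;=\; \widetilde{d}(x) \;+\; x\cdot \widetilde{d}(x_1),
\]
which is the defining identity for $\widetilde{d}\in\Der(X,A)$.

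There is no real obstacle here; the statement is essentially tautological once one remembers (i) the action of $X$ on $A$ in a lifting is pulled back along $\omega$, and (ii) $\omega$ is a homomorphism of the underlying groups. The only thing I would be careful about is writing the action on the correct side (the definition uses $b\cdot d(b_1)$, not $d(b_1)\cdot b$, so the factor $\omega(x)$ — hence $x$ — appears acting on $\widetilde{d}(x_1)$, not on $\widetilde{d}(x)$). No case analysis or auxiliary construction is needed, so the whole proof should be at most a three‑line display.
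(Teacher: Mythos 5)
Your proposal is correct and follows exactly the same route as the paper's proof: unfold $\widetilde{d}=d\omega$ on a sum, use that $\omega$ is a homomorphism, apply the derivation identity for $d$, and identify $\omega(x)\cdot a$ with $x\cdot a$ since the $X$-action on $A$ is defined via $\omega$. Nothing is missing.
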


\begin{proof}
	Let $x,y\in X$. Then
	\begin{eqnarray*}
		\widetilde{d}(x+y)	&=& d\omega(x+y) \\
		&=& d(\omega(x)+\omega(y)) \\
		&=& d(\omega(x))+\omega(x)\cdot d(\omega(y)) \\
		&=& \widetilde{d}(x)+x\cdot \widetilde{d}(y).
	\end{eqnarray*}
	Thus $\widetilde{d}$ is a derivation of the crossed module $(A,X,\varphi)$, i.e. $\widetilde{d}\in\Der(X,A)$.
\end{proof}

The derivation $\widetilde{d}$ will be called \emph{lifting of $d$ over $\omega$} or briefly \emph{lifting of} $d$. This construction defines a semi-group homomorphism as follows:
\[\begin{array}{ccc}
\Der(B,A) & \longrightarrow & \Der(X,A) \\
d & \longmapsto & \widetilde{d}=d\omega
\end{array}\tag{$\ast$}\label{1}\]
\begin{proposition}\label{endolift}
	Let $(A,B,\alpha)$ be a crossed module, $(\varphi,X,\omega)$ a lifting of $(A,B,\alpha)$, $d\in\Der(B,A)$ and $\widetilde{d}$ be the lifting of $d$. Then $\theta_d=\theta_{\widetilde{d}}$ and $\sigma_d\omega=\omega\sigma_{\widetilde{d}}$.
\end{proposition}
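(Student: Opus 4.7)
The plan is to unpack both claimed identities directly from the definitions of $\theta$, $\sigma$, and the lifting $\widetilde{d} = d\omega$, using essentially one structural fact: the defining commutativity of a lifting, namely $\omega\varphi=\alpha$. Nothing beyond this identity and the fact that $\omega$ is a group homomorphism should be needed.

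First I would dispose of $\theta_d=\theta_{\widetilde{d}}$. By definition, for $a\in A$,
\[
\theta_{\widetilde{d}}(a) = \widetilde{d}\,\varphi(a) + a = d\bigl(\omega\varphi(a)\bigr) + a.
\]
Applying $\omega\varphi=\alpha$ collapses the right-hand side to $d\alpha(a)+a=\theta_d(a)$, and the identity follows on $A$ (note that both endomorphisms live on $A$, so the equality is being asserted as self-maps of $A$, which matches).

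Next I would verify $\sigma_d\omega=\omega\sigma_{\widetilde{d}}$. For $x\in X$,
\[
\omega\sigma_{\widetilde{d}}(x) = \omega\bigl(\varphi\,\widetilde{d}(x) + x\bigr) = \omega\varphi\bigl(d\omega(x)\bigr) + \omega(x),
\]
where I use that $\omega\colon X\to B$ is a group homomorphism. A second appeal to $\omega\varphi=\alpha$ rewrites this as $\alpha d(\omega(x)) + \omega(x) = \sigma_d(\omega(x))$, which is exactly $(\sigma_d\omega)(x)$.

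There is no real obstacle here; the only thing to be careful about is keeping the direction of the lifting square straight (so that one uses $\omega\varphi=\alpha$ and not the reverse composition) and remembering that $\omega$ is a homomorphism of groups so it commutes with the additive notation inside the definitions of $\theta_{\widetilde{d}}$ and $\sigma_{\widetilde{d}}$. Note also that, in general, one should \emph{not} expect $\sigma_d=\sigma_{\widetilde{d}}$ as endomorphisms, since they act on different groups ($B$ vs.\ $X$); the correct relation is precisely the intertwining $\sigma_d\omega=\omega\sigma_{\widetilde{d}}$ stated in the proposition.
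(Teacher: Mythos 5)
Your proof is correct: both identities follow by direct computation from the definitions of $\theta$, $\sigma$, and $\widetilde{d}=d\omega$, together with the lifting identity $\omega\varphi=\alpha$ and the fact that $\omega$ is a group homomorphism. The paper itself omits this argument as an ``easy calculation,'' and your proposal supplies exactly that calculation with no gaps.
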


\begin{proof}
	This can be proven by an easy calculation. So proof is omitted.
\end{proof}

\[\xymatrix{ && X \ar[dd]^\omega \ar@/_/[lldd]_{\widetilde{d}=d\omega} \ar@(u,r)[]^{\sigma_{\widetilde{d}}} \\ && \\ A  \ar@<-.2ex>[uurr]_\varphi \ar@(l,d)[]_{\theta_d=\theta_{\widetilde{d}}} \ar@<-.2ex>[rr]_\alpha
	&& B \ar@/_/[ll]_{d} \ar@(r,d)[]^{\sigma_d}}\]

Combining the results in Theorem \ref{derlift} and Proposition \ref{endolift} we can give the following corollary.

\begin{corollary}\label{dlift}
	Let $(A,B,\alpha)$ be a crossed module, $(\varphi,X,\omega)$ a lifting of $(A,B,\alpha)$ and $d\in\D(B,A)$. Then $\widetilde{d}=d\omega$ is a regular derivation of $(A,X,\varphi)$, i.e. $\widetilde{d}\in\D(X,A)$.
\end{corollary}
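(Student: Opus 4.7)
The plan is to combine the two results just established, Theorem \ref{derlift} and Proposition \ref{endolift}, with the characterization of regular derivations recalled in the previous proposition. By Theorem \ref{derlift} we already know that $\widetilde{d}=d\omega$ lies in $\Der(X,A)$, so the only thing left to verify is that $\widetilde{d}$ is in fact a unit of the Whitehead semi-group $(\Der(X,A),\circ)$, i.e.\ belongs to $\D(X,A)$.

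For this I would invoke the cited equivalences: a derivation is regular exactly when its associated endomorphism $\theta$ of the top group is an automorphism (equivalently, when the associated endomorphism $\sigma$ of the bottom group is an automorphism). Thus it is enough to check that $\theta_{\widetilde{d}}\in\Aut A$. By Proposition \ref{endolift} we have the identity $\theta_{\widetilde{d}}=\theta_{d}$, and because $d\in\D(B,A)$ the same characterization applied to the base crossed module $(A,B,\alpha)$ gives $\theta_{d}\in\Aut A$. Therefore $\theta_{\widetilde{d}}\in\Aut A$, which by the equivalence forces $\widetilde{d}\in\D(X,A)$.

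There is really no technical obstacle here: all the content has been packed into the preceding two statements, and the corollary just strings them together through the standard characterization of regular derivations. One small point worth mentioning in writing it up is that the second conclusion of Proposition \ref{endolift}, $\sigma_d\omega=\omega\sigma_{\widetilde{d}}$, is not quite strong enough on its own to transport regularity through $\sigma$ (since $\omega$ need not be injective or surjective in general), so the argument should be routed through $\theta$ rather than through $\sigma$. Once this is observed, the proof reduces to a single line noting $\theta_{\widetilde d}=\theta_d\in\Aut A$, hence $\widetilde d\in\D(X,A)$.
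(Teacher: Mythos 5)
Your argument is correct and is exactly the route the paper intends: the paper gives no written proof, merely stating that the corollary follows by "combining" Theorem \ref{derlift} and Proposition \ref{endolift}, and your fleshing-out via $\theta_{\widetilde d}=\theta_d\in\Aut A$ together with the cited characterization of regular derivations is the natural way to do that. Your remark that the argument should go through $\theta$ rather than $\sigma$ (since $\sigma_d\omega=\omega\sigma_{\widetilde d}$ alone does not transport regularity when $\omega$ is not bijective) is a sensible observation the paper leaves implicit.
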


This construction defines a group homomorphism as follows:

\[\begin{array}{ccc}
\D(B,A) & \longrightarrow & \D(X,A) \\
d & \longmapsto & \widetilde{d}=d\omega
\end{array}\tag{$\ast\ast$}\label{2}\]

In the following proposition we will obtain a derivation for the base crossed module using a derivation of lifting crossed module.

\begin{proposition}\label{dersink}
	Let $(A,B,\alpha)$ be a crossed module, $(\varphi,X,\omega)$ a lifting of $(A,B,\alpha)$ and $\widetilde{d}\in\Der(X,A)$. Let $\omega$ has a section $s\colon B\rightarrow A$, i.e. $\omega s=1_B$. Then $d=\widetilde{d}s$ is a derivation of $(A,B,\alpha)$, i.e. $d\in\Der(B,A)$.
\end{proposition}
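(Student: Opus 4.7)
The plan is to verify directly the Leibniz-type identity that defines a derivation. First a small bookkeeping remark: as written, $s\colon B\to A$ cannot satisfy $\omega s=1_B$ because $\omega\colon X\to B$, so I read this as a typographical slip and take $s\colon B\to X$ to be a group homomorphism with $\omega s=1_{B}$ (otherwise $\widetilde{d}s$ is not even well-defined, and without the homomorphism condition the computation below breaks at its very first step).

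The forward computation then runs as follows. Fix $b,b_1\in B$ and expand
\begin{align*}
d(b+b_1)&=\widetilde{d}s(b+b_1)\\
&=\widetilde{d}(s(b)+s(b_1))\\
&=\widetilde{d}(s(b))+s(b)\cdot\widetilde{d}(s(b_1)),
\end{align*}
where the second equality uses that $s$ is a group homomorphism, and the third is the derivation property of $\widetilde{d}\in\Der(X,A)$ applied to the pair $s(b),s(b_1)\in X$. Now recall that in the lifting crossed module $(A,X,\varphi)$ the action of $X$ on $A$ is defined through $\omega$, namely $x\cdot a=\omega(x)\cdot a$; applying this with $x=s(b)$ together with $\omega s=1_B$ yields $s(b)\cdot\widetilde{d}(s(b_1))=b\cdot\widetilde{d}(s(b_1))=b\cdot d(b_1)$. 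Hence $d(b+b_1)=d(b)+b\cdot d(b_1)$, which is exactly the derivation axiom for $(A,B,\alpha)$.

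There is no real obstacle in the calculation itself, which is a one-line composition once the definitions are unwound. The only subtle point — and the place where one must be careful — is the interpretation of the section $s$: the proof genuinely requires $s$ to respect the group structure of $B$, and it requires the convention (from the definition of a lifting) that the action of $X$ on $A$ factors through $\omega$. Both are standard in this setup, so after flagging the typo I would simply present the three-line computation above as the proof.
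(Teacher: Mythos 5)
Your proof is correct and is essentially identical to the paper's own computation: expand $d(b+b_1)=\widetilde{d}(s(b)+s(b_1))$, apply the derivation property of $\widetilde{d}$, and use that the $X$-action on $A$ factors through $\omega$ together with $\omega s=1_B$ to convert $s(b)\cdot(-)$ into $b\cdot(-)$. Your observation that the statement's $s\colon B\rightarrow A$ is a typo for $s\colon B\rightarrow X$ (and that $s$ must be a group homomorphism) is a correct and worthwhile clarification of hypotheses the paper uses implicitly.
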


\begin{proof}
	Let $b,b_1\in B$. Then
	\begin{eqnarray*}
		d(b+b_1) &=& \widetilde{d}s(b+b_1) \\
		&=& \widetilde{d}(s(b)+s(b_1)) \\
		&=& \widetilde{d}s(b)+s(b)\cdot\widetilde{d}(s(b_1)) \\
		&=& \widetilde{d}s(b)+\omega s(b)\cdot \widetilde{d}(s(b_1)) \\
		&=& d(b)+b\cdot d(b_1)
	\end{eqnarray*}
	Thus $d$ is a derivation of the base crossed module $(A,B,\alpha)$, i.e. $d\in\Der(B,A)$.
\end{proof}

In this case the (\ref{1}) homomorphism becomes injective. Thus $\Der(B,A)$ can be consider as a semi-subgroup of $\Der(X,A)$.

%

\begin{corollary}
	Let $(A,B,\alpha)$ be a crossed module, $(\varphi,X,\omega)$ a lifting of $(A,B,\alpha)$ and $\widetilde{d}\in\D(X,A)$. Let $\omega$ has a section $s\colon B\rightarrow A$, i.e. $\omega s=1_B$. Then $d=\widetilde{d}s$ is a regular derivation of $(A,B,\alpha)$, i.e. $d\in\D(B,A)$.
\end{corollary}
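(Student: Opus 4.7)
The plan is to combine Proposition \ref{dersink}, which handles the derivation property for free, with a direct check of regularity that parallels the argument behind Corollary \ref{dlift}. By Proposition \ref{dersink}, the map $d=\widetilde{d}s$ already lies in $\Der(B,A)$, so the only remaining task is to exhibit it as a unit of the Whitehead semigroup; by the characterization of regular derivations recalled just before Corollary \ref{dlift}, it suffices to show $\sigma_d\in\Aut(B)$ (equivalently $\theta_d\in\Aut(A)$).

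The first technical step is the identity
\[\sigma_d \;=\; \omega\,\sigma_{\widetilde{d}}\,s,\]
obtained by unwinding $\sigma_d(b)=\alpha d(b)+b$ via $\alpha=\omega\varphi$, the relation $\varphi\widetilde{d}(x)=\sigma_{\widetilde{d}}(x)-x$ built into the definition of $\sigma_{\widetilde{d}}$, and $\omega s=1_B$. Since $\widetilde{d}\in\D(X,A)$, let $\widetilde{e}\in\D(X,A)$ be its inverse in the Whitehead group of the lifting, and set $e=\widetilde{e}s$, which again belongs to $\Der(B,A)$ by Proposition \ref{dersink} and satisfies the analogous formula $\sigma_e=\omega\,\sigma_{\widetilde{e}}\,s$.

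The heart of the argument is then to verify that $e$ is a two-sided inverse of $d$ in $(\Der(B,A),\circ)$. By the multiplicativity $\sigma_{d_1\circ d_2}=\sigma_{d_1}\sigma_{d_2}$ recalled in the preliminaries on derivations, this is equivalent to $\omega\sigma_{\widetilde{d}}s\omega\sigma_{\widetilde{e}}s=1_B$ and its mirror image. Starting from $\sigma_{\widetilde{d}}\sigma_{\widetilde{e}}=1_X$, the only impediment is the middle factor $s\omega$: for $y$ in the image of $\sigma_{\widetilde{e}}s$ one writes $s\omega(y)=y+k$ with $k\in\ker\omega$, and then uses the structural identity $\sigma_{\widetilde{d}}(y+k)-(y+k)\in\varphi(A)$ together with $\omega\varphi=\alpha$ to transport the $k$-contribution harmlessly through the outer $\omega$.

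The main obstacle is precisely this compatibility with $\ker\omega$: because $s$ is only a right inverse of $\omega$, the inserted $s\omega$ is not the identity on $X$ and injects $\ker\omega$-terms that must be killed by the structural identities of the regular derivation $\widetilde{d}$ rather than by direct cancellation. Once that point is handled, $\sigma_d$ acquires the explicit inverse $\omega\sigma_{\widetilde{e}}s$, hence $d\in\D(B,A)$; as a by-product, $\widetilde{d}\mapsto\widetilde{d}s$ gives a group homomorphism $\D(X,A)\to\D(B,A)$ splitting the monomorphism (\ref{2}) of Corollary \ref{dlift}.
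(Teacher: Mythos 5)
Your reduction of the problem is sound up to a point: Proposition \ref{dersink} does give $d=\widetilde{d}s\in\Der(B,A)$, your identity $\sigma_d=\omega\sigma_{\widetilde{d}}s$ is correct (using $\alpha=\omega\varphi$, $\sigma_{\widetilde{d}}(x)=\varphi\widetilde{d}(x)+x$ and $\omega s=1_B$), and so is the reduction to showing that $e=\widetilde{e}s$ inverts $d$. The fatal step is exactly the one you flag as ``the main obstacle''. Writing $y=\sigma_{\widetilde{e}}s(b)$ and $s\omega(y)=y+k$ with $k\in\ker\omega$, and using that $\sigma_{\widetilde{d}}$ is an endomorphism of $X$ with $\sigma_{\widetilde{d}}\sigma_{\widetilde{e}}=1_X$, one gets
\[\sigma_d\sigma_e(b)=\omega\sigma_{\widetilde{d}}(y+k)=\omega\sigma_{\widetilde{d}}(y)+\omega\bigl(\varphi\widetilde{d}(k)+k\bigr)=b+\alpha\widetilde{d}(k).\]
The $k$-contribution is therefore \emph{not} transported harmlessly through the outer $\omega$: it survives as $\alpha\widetilde{d}(k)$, precisely because $\omega\varphi=\alpha$ carries $\varphi(A)$ onto $\alpha(A)$ rather than to $0$. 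Nothing in the hypotheses forces $\widetilde{d}(\ker\omega)\subseteq\ker\alpha$, so this term need not vanish.

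Moreover, no argument can close this gap, because the statement fails without an extra hypothesis. Take $A=B=\mathbb{Z}_2$ with $\alpha=1_{\mathbb{Z}_2}$ and the trivial action; let $X=\mathbb{Z}_2\times\mathbb{Z}_2$, $\varphi(a)=(a,0)$, $\omega(x,y)=x$, so that $(\varphi,X,\omega)$ is a lifting, and take the homomorphic section $s(b)=(b,b)$. The homomorphism $\widetilde{d}(x,y)=y$ is a derivation of $(A,X,\varphi)$ with $\theta_{\widetilde{d}}=1_A$, hence $\widetilde{d}\in\D(X,A)$; but $d=\widetilde{d}s=1_{\mathbb{Z}_2}$ satisfies $\theta_d(a)=d\alpha(a)+a=a+a=0$, so $\theta_d\notin\Aut(A)$ and $d\notin\D(B,A)$. (The paper states this corollary without proof; the example shows that regularity of $\widetilde{d}s$ requires an additional condition such as $\widetilde{d}(\ker\omega)\subseteq\ker\alpha$, which does hold, for instance, when $\widetilde{d}$ is a lifted derivation $d_0\omega$.)
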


Under the conditions of this corollary the (\ref{2}) homomorphism becomes injective. Thus $\D(B,A)$ can be consider as a subgroup of $\D(X,A)$.

\small{

}
\end{document}